\newcommand{\ZZ}{\mathbb{Z}}
\newcommand{\NN}{\mathbb{N}}
\newcommand{\CC}{\mathbb{C}}
\newcommand{\QQ}{\mathbb{Q}}
\newcommand{\dy}[1]{\mathbb{#1}}
\newcommand{\eps}{\varepsilon}
\newcommand{\deri}{\mathcal{D}}
\newcommand{\trefle}{(\ding{168})}
\newtheorem{theorem}{Theorem}[section]
\newtheorem{lemma}[theorem]{Lemma} 
\newtheorem{definition}{Definition}
\newtheorem{remark}[theorem]{Remark}
\newcommand{\oeis}[1]{\href{https://oeis.org/#1}{{#1}}}
\newcommand{\mono}{\mathscr{W}}
\newcommand{\lcm}{\operatorname{lcm}}
\newcommand{\Cat}{\operatorname{Cat}}
\title{Posets and Fractional Calabi-Yau Categories}
\author{F. Chapoton}
\date{\today}
\begin{document}

\maketitle

\begin{verse}
\textit{To see a World in a Grain of Sand\\
And a Heaven in a Wild Flower\\
Hold Infinity in the palm of your hand\\
And Eternity in an hour}\\
\hspace{3cm}\textsc{William Blake}
\end{verse}

\begin{abstract}
  This article deals with a relationship between derived categories of
  modules over some partially ordered sets and triangulated categories
  arising from quasi-homogeneous isolated singularities. It produces
  heuristics for the existence of derived equivalences between posets,
  using the geometric category as an auxiliary intermediate. The notion
  of Weight plays a central role as a simple footprint of the derived
  categories under consideration.
\end{abstract}

\section*{Introduction}

The aim of this article is to explain a simple idea that starts from
the combinatorics of partially ordered sets (posets) and leads to
conjectures about fractional Calabi-Yau categories and their derived
equivalences.

Let us start with an infinite family of combinatorial objects, given as the
disjoint union of finite sets $P_n$ indexed for example by positive
integers. Suppose that for every $n$, the cardinality of $P_n$ can be
written under the specific shape
\begin{equation*}
    |P_n| = \frac{\prod_{i=1}^{m} (D-d_i)}{\prod_{i=1}^{m} d_i},
\end{equation*}
where $m$ is a positive integer, $d_1,\dots,d_m$ is a multi-set of positive
integers and $D$ is a positive integer, all depending on $n$ in a regular way.

Then, one can hope for the following statement \trefle:

\smallskip

\centerline{\fbox{
\begin{minipage}{0.9\textwidth}
\textit{There exists a family of partial orders $(P_n, \leq)$ such
  that, for all $n$, the derived category of $(P_n, \leq)$ is
  derived-equivalent to the fractional Calabi-Yau category associated
  with a generic isolated quasi-homogeneous singularity with variable weights
  $(d_1,\dots,d_m)$ and total weight $D$.}
\end{minipage}}}

\smallskip

In this statement, the derived category of a poset $(P, \leq)$ means
the bounded derived category of finite-dimensional modules over its
incidence algebra over a field. The meaning of the category attached
to the singularity is some kind of derived Fukaya category,
providing a categorified version of the classical Milnor
theory of isolated hypersurface singularities. Milnor theory is
briefly recalled in section \ref{singular} and the related categorical
framework is considered later in section \ref{fractionalCY}.

\smallskip

If the property \trefle\, holds, it has a much more concrete
consequence, namely the Coxeter polynomial of the poset $(P_n, \leq)$
gets identified with the characteristic polynomial of the monodromy
for the isolated singularity and can therefore be computed by just
knowing the numbers $d_1,\dots,d_m$ and $D$, by results of Milnor and
Orlik explained in section \ref{singular}. The data of $d_1,\dots,d_m$
and $D$, satisfying the appropriate conditions, will be called a
\textit{Weight}\footnote{We write Weight with a capital
  letter to make it more clear that it stands for a precise technical
  definition, see \Cref{define_Weight}.}.

As the Coxeter polynomial is also easy to compute directly for any
given poset (using the Coxeter matrix, see App. \ref{cox}), this gives a criterion to check if a family of partial
orders on $P_n$ could have the expected property. When the first Coxeter polynomials for small $n$ are as expected, this gives a
strong evidence for the statement \trefle\, above. In this case, let us say that the
\textit{Coxeter criterion} holds for this family of posets.

\smallskip

Here is a schematic description of the situation.

\tikzstyle{block} = [rectangle, draw, fill=blue!10, 
    text width=6em, text centered, minimum height=1cm, rounded corners]
\tikzstyle{block2} = [rectangle, draw, fill=orange!10, 
    text width=6em, text centered, minimum height=1cm, rounded corners]
\tikzstyle{line} = [draw, -latex']

\begin{center}
\begin{tikzpicture}[node distance = 3cm, auto]
    \node [block] (poset) {posets};
    \node [block, right of=poset] (tri) {triangulated categories with Serre functor};
    \node [block2, right of=tri] (fcytri) {fractional Calabi-Yau triangulated categories};
    \node [block2, right of=fcytri] (weight) {Weights};
    \node [block, below of=tri] (poly) {polynomials};
    \node [block2, below of=fcytri] (cyclopoly) {products of cyclotomic polynomials};
    \path [line] (poset) -- (tri);
    \path [line] (weight) -- (fcytri);
    \path [draw, right hook-latex] (fcytri) -- (tri);
    \path [line] (poset) -- (poly);
    \path [line] (weight) -- (cyclopoly);
    \path [draw, right hook-latex] (cyclopoly) --(poly);
    \path [line] (tri) --(poly);
    \path [line] (fcytri) --(cyclopoly);
\end{tikzpicture}
\end{center}

The top-left arrow sends a poset to the derived category of modules over its
incidence algebra over a field. This is a many-to-one application, as
one can easily find examples of derived-equivalent but not isomorphic
posets. The top-right arrow is the Fukaya-Seidel categorification of the
Milnor fibre construction. The
arrows from triangulated categories to polynomials are given by the
characteristic polynomial of the Auslander-Reiten functor (which is a
shifted version of the Serre functor). As said above, there are direct
constructions of these polynomials from posets and from Weights, which give the diagonal arrows.

\smallskip

This diagram is compatible with the natural monoidal structures,
namely cartesian product of posets, tensor product of triangulated
categories, and the monoid structure on Weights introduced in section
\ref{monoid}. For the bottom line, one can use a tensor product of
polynomials, but we will not need that.

\smallskip

So the Coxeter criterion means that we have a sequence of polynomials
coming from the top-left that can be identified with a sequence of
polynomials coming from the top-right. The main idea is to take this
equality as a rather strong hint that the triangulated categories should be
themselves equivalent.

\smallskip

As we will see in the examples below, partial orders on a given
combinatorial family that satisfy the Coxeter criterion are not
necessarily unique. There can very well be several distinct families
of posets with the same cardinalities, all having their derived
categories derived-equivalent to the same singularity category. In
this case, the implied derived equivalences between the different
posets of the same cardinality can sometimes be proved by other means.

\smallskip

Given a family of combinatorial objects, finding some correct partial
orders is not easy in general. Sometimes the most natural partial
orders all fail to satisfy the Coxeter criterion. One can then look
for more subtle partial orders on the same combinatorial objects, or maybe
on other combinatorial objects counted by the same sequence of
numbers.

\smallskip

Another implication is that one can use factorisations in the monoid
of Weights to propose conjectural derived equivalences. If a Weight is
associated with a poset $P$, and factors into simpler Weights
associated with smaller posets $Q_1,\dots,Q_k$, then one should expect
a derived equivalence between $P$ and the cartesian product of posets
$Q_1 \times \dots \times Q_k$. A very simple case is given by the Dynkin
quivers $\dy{D}_4$ and $\dy{A}_2 \times \dy{A}_2$.


\smallskip

Acknowledgements: Thanks to Guo-Niu Han for the proof of Lemma
\ref{lemme_ASM}. Thanks to Pierre Baumann and Sefi Ladkani for
interesting discussions and useful comments. Thanks to the referee of
the first version for convincing explanations about the existence of
appropriate Fukaya categories. Thanks to Alexandru Oancea and Yanki
Lekili for very helpful guidance in the symplectic side of the matter.

This research has been supported by the ANR project Charms (ANR-19-CE40-0017).

\section{Quasi-homogeneous isolated singularities}

\label{singular}

The theory of singularities of algebraic functions from $\CC^m$ to
$\CC$ is a very classical topic, with a vast literature. One could
cite for instance the famous classification by Arnold of rigid
isolated singularities by the Dynkin diagrams of type $\dy{A}\dy{D}\dy{E}$. Even more
well-studied is the case of singularities of quasi-homogeneous
algebraic functions, where each coordinate on $\CC^m$ is given a
specific weight and the function is assumed to be homogeneous for the
total degree with respect to these weights.

Let us sketch briefly the celebrated construction of Milnor for
isolated singularities. For more details, the reader may consult
\cite{milnor_livre, dimca}. Let $f$ be a quasi-homogeneous polynomial
function from $\CC^m$ to $\CC$. Assume that $f$ has an isolated
critical point at $0 \in \CC^m$ above $0 \in \CC$. Milnor has shown that
over a sufficiently small circle $S_\eps$ around $0 \in \CC$, all the fibres
of $f$ (intersected with a small sphere around $0 \in \CC^m$) are
smooth and diffeomorphic, with the homotopy type of a bouquet of $\mu_f$
spheres of dimension $m-1$. The fibres have therefore only one
interesting homology group $H_{m-1}$, of dimension $\mu_f$. This locally-trivial
fibration over the circle $S_\eps$ is called the \textit{Milnor
  fibration} of $f$ and $\mu_f$ is called the \textit{Milnor number} of the
singularity.

By turning once over the circle $S_\eps$ and following the cycles
using local triviality of the Milnor fibration, one gets a linear
endomorphism of the homology group $H_{m-1}(f^{-1}(\eps))$. This is
called the \textit{monodromy} of the singularity.

In the case of a quasi-homogeneous polynomial $f$ with an isolated
singularity, Milnor and Orlik \cite{milnor_orlik} have given an
explicit formula for the characteristic polynomial of the monodromy
(or rather for its roots) depending only on the degrees
${d_1,\dots,d_m}$ of the variables and the total degree $D$ of $f$. As
a special case of this formula, the Milnor number is given by
\begin{equation}
  \label{mu_formula}
  \mu_f = \frac{\prod_{i=1}^{m} (D-d_i)}{\prod_{i=1}^{m} d_i}.
\end{equation}

Let us now present their formula briefly. The following description is
a streamlined presentation, with slightly modified notations, of Milnor and
Orlik result in \cite[\S 3]{milnor_orlik}, see also \cite[\S 9]{milnor_livre}.
Let $u_i$ be the numerator
of $D/d_i$ as a reduced fraction, for $i=1,\dots,m$. For each divisor
$j$ of $D$, define
\begin{equation}
  \chi_j = \prod_{\stackrel{i=1}{u_i | j}}^{m} \frac{d_i-D}{d_i}.
\end{equation}
Then for each divisor $j$ of $D$, there is a relative integer $s_j$ such that
\begin{equation}
  j s_j = \sum_{d | j} \mu_{j/d} \, \chi_d,
\end{equation}
where $\mu$ is (just here) the standard number-theoretic Möbius function. The
characteristic polynomial of the monodromy is then
\begin{equation}
  \left(\prod_{j|D} (t^j-1)^{s_j}\right)^{(-1)^m}.
\end{equation}

Consider for example the degrees $(d_1,d_2,d_3)=(2,3,4)$ and $D =
10$. Then $(u_1,u_2,u_3) = (5,10,5)$. One computes that
$(\chi_1,\chi_2,\chi_5,\chi_{10})=(1,1,6,-14)$ and it follows that
$(s_1,s_2,s_5,s_{10})=(1,0,1,-2)$. As a product of cyclotomic
polynomials, the characteristic polynomial is therefore
$\Phi_2^2 \Phi_5 \Phi_{10}^2$.  

\subsection{Hodge structure and $q$-Milnor number}

This information about the characteristic polynomial can been refined
as follows.

Let us consider the formula
\begin{equation}
  \label{q_milnor_formula}
  \frac{\prod_{i=1}^{m} [D-d_i]_q}{\prod_{i=1}^{m} [d_i]_q},
\end{equation}
where $[d]_q = (q^d-1)/(q-1)$ is the $q$-analogue of an integer $d$.

For the degrees $(d_1,\dots,d_m)$ and $D$ of an isolated
quasi-homogeneous singularity, it is known that
\eqref{q_milnor_formula} is a polynomial in $\NN[q]$, whose
coefficients are the dimensions of the homogeneous components of the
Jacobian algebra of the singularity. This is a classical statement,
see \cite{arnold_gusein}, \cite[Theorem 6.4]{hertling_mase} or
\cite[5.11]{steenbrink_oslo}.

In this situation, the polynomial \eqref{q_milnor_formula} is also
closely related to the eigenvalues of the monodromy. As shown by
J. Steenbrink \cite{steenbrink_oslo, steenbrink_2022}, the homology group
$H_{m-1}$ carries a mixed Hodge structure, compatible with the
monodromy. In the case of quasi-homogeneous isolated singularities,
the dimensions of the successive quotients of the Hodge filtration can
be encoded by the coefficients of a polynomial in $q$, which according
to \cite[5.11]{steenbrink_oslo} turns out to be \eqref{q_milnor_formula}.

The polynomial \eqref{q_milnor_formula} in fact also contains complete
information on the multiplicities of the eigenvalues of the monodromy,
hence gives an alternative way to access them, different from the
Milnor-Orlik method recalled above. It suffices to look at
the coefficients of the unique polynomial representative of
\eqref{q_milnor_formula} modulo $q^D-1$ with degree at most $D-1$. The
coefficients, appropriately centered, are then the multiplicities of the $D$-th roots of unity
as eigenvalues of the monodromy.

For a more precise account of these aspects, see \cite[\S 5, \S 6]{hertling_mase}.

\smallskip

One can note that setting $q=1$ in \eqref{q_milnor_formula} recovers
the expression \eqref{mu_formula} for the Milnor number $\mu_f$. The
expression \eqref{q_milnor_formula} will be called the
\textit{$q$-Milnor number} of the singularity.

\smallskip

In the example of degrees $(2,3,4)$ and total degree $10$, one finds
\begin{equation*}
  q^{12} + q^{10} + q^{9} + 2q^{8} + q^{7} + 2q^{6} + q^{5} + 2q^{4} + q^{3} + q^{2} + 1,
\end{equation*}
which reduces modulo $q^{10} -1$ to
\begin{equation*}
  q^{9} + 2q^{8} + q^{7} + 2q^{6} + q^{5} + 2q^{4} + q^{3} + 2 q^{2} + 2.
\end{equation*}
This is compatible with the characteristic polynomial, which is
\begin{equation*}
(t^5 + 1)^2 (t^4 + t^3 + t^2 + t + 1),
\end{equation*}
as seen at the end of the previous section.

\section{Fractional Calabi-Yau categories}

\label{fractionalCY}

Fractional Calabi-Yau categories were introduced by M. Kontsevich around
1998 \cite{kont1998} as a natural generalisation of Calabi-Yau
categories, themselves motivated by the properties of coherent sheaves
on Calabi-Yau manifolds. Fractional Calabi-Yau categories sometimes
appear in the semi-orthogonal decompositions of bounded derived
categories of coherent sheaves on algebraic varieties, and in
particular Fano varieties, see for example \cite{kuznetsov}.

\smallskip

Recall that a \textit{Serre functor} in a triangulated category
$\mathcal{T}$ is an auto-equivalence $S$ of $\mathcal{T}$ such that
there is a bi-natural isomorphism
\begin{equation*}
  Hom(X,Y)^* \simeq Hom(Y, SX),
\end{equation*}
where $*$ is the linear dual over the ground field. For more on this
notion, we refer to \cite{bondal_kapranov, keller_CY}. The existence
of a Serre functor $S$ on a triangulated category is equivalent to the
existence of an Auslander-Reiten translation functor $\tau$. These two
functors are unique up to isomorphism and related by $S =
\tau[1]$, where $[1]$ is the shift functor. They both exist for example for the bounded derived
categories of modules over a finite dimensional algebra of finite
global dimension over a field, see \cite[\S 3.1]{keller_CY}. This includes
incidence algebras of finite posets over a field.

\smallskip

A triangulated category $\mathcal{T}$ is a \textit{fractional Calabi-Yau
  category} if it has a Serre functor $S$ and there exist integers $p$
and $q$ such that $S^q \simeq [p]$ as functors. Here $[p]$ is the
$p$-th power of the shift functor. In this case, the Calabi-Yau
dimension is the pair $(p,q)$, often denoted $p/q$ by a common abuse
of notation.

\subsection{Fukaya-Seidel categories for singularities}

Let us keep the same notations as in the previous section.

Attached to each quasi-homogeneous isolated singularity $f$, there is
a triangulated category $\deri_f$ which is a categorification of the
Milnor geometric theory described in section \ref{singular}. This
category has been defined by Seidel \cite{seidel, seidel_more} in the
context of symplectic geometry. It is obtained as the derived category
(or homology category) of a $A_\infty$-category of Fukaya type, in a
directed version, starting from the Milnor fibration and using a morsification.

\begin{remark}
  \textbf{Word of warning:} The author, lacking expertise in the field
  of symplectic geometry, failed to find good references for the three
  plausible statements below, and urge the reader to take them with
  appropriate caution. Most of the articles on closely related topics
  seem to refer directly to the original book by Seidel
  \cite{seidel_more} for the definition of the Fukaya-Seidel
  categories. Some relevant articles are
  \cite{futaki11,futaki13,haber20,haber22,favero23,auroux}.
\end{remark}

(S0) The category $\deri_f$ has the property that the Grothendieck
group $K_0(\deri_f)$ is identified with the homology group $H_{m-1}$,
in such a way that the Auslander-Reiten functor on $\deri_f$ induces a
linear endomorphism on $K_0(\deri_f)$ which is identified with the
monodromy (up to an appropriate shift). This statement seems to be
folkore in the domain.

(S1) This category $\deri_f$ is fractional Calabi-Yau,
with Calabi-Yau dimension $(C, D)$ where
\begin{equation}
  \label{CY_dimension}
  C = \sum_{i=1}^{m} (D - 2 d_i).
\end{equation}

(S2) The construction $f \mapsto \deri_f$ is multiplicative, sending the
Thom-Sebastiani sum of singularities to the tensor product of
triangulated categories. This was explicitely stated as Conjecture 1.3
in \cite{auroux} and is apparently still open.

\subsection{About mirror symmetry and $B$-model}

Mirror symmetry suggests the existence of mirror singularities for
isolated quasi-homogeneous singularities in general. This is known
only in some specific cases and has been in particular much studied in
the case of invertible polynomials, as considered by Berglund and
Hübsch in \cite{berglund-h} and by Kreuzer and Skarke in
\cite{kreuzer_skarke}. Then more algebraic methods are available to
define and study the same categories, for instance homological matrix
factorisations. For a more precise view on this, the reader may see
\cite{kajiura_saito}, \cite[\S 4]{ebeling} and \cite{ebeling_ploog}.

\smallskip

In the language of theoretical physics (and with all the necessary
caution), the category $\deri_f$ has something to do with the
$A$-model for the Landau-Ginzburg potential $f$, and it should be
related to $A$-branes of this model. The mirror symmetry is supposed
to identify these $A$-branes with $B$-branes on the mirror manifold,
which seem to be better understood in mathematical terms.





\section{The monoid of Weights}

\label{monoid}

Let us define in this section a monoid $\mono$ whose elements will be
called \textit{Weights}. Our notion of Weight is closely related to
what is called a \textit{weight system} in singularity theory, see for
example \cite{hertling_mase}.

\begin{definition}
  \label{define_Weight}
  A Weight is a pair $((d_1, d_2, \dots, d_m), D)$ where
  $d_1,d_2,\dots,d_m$ and $D$ are positive integers such that the
  formula
  \begin{equation}
    \label{q_formula}
    \frac{\prod_{i=1}^{m} [D-d_i]_q}{\prod_{i=1}^{m} [d_i]_q}
  \end{equation}
  defines a polynomial in $\NN[q]$. This Weight will be denoted
  $(d_1,\dots,d_m ;D)$.

  The order of the $d_i$ is irrelevant. Weights that only differ by
  multiplying all $d_i$ and $D$ by a common positive integer $N$ are
  considered to be the same.
\end{definition}
Note that $m=0$ is allowed, as the formula is then the empty product.

One will always assume that $d_i < D - d_i$ for all $i$, as
factors where $2 d_i = D$ do not contribute to the product \eqref{q_formula}.

Dividing every $d_i$ and $D$ by their greatest common divisor and then
sorting the $d_i$ in increasing order gives a unique canonical representative.

For example, in the case $(2, 3 ; 8)$, one finds the fraction
\begin{equation}
  \frac{[6]_q [5]_q}{[2]_q [3]_q} = q^{6} + q^{4} + q^{3} + q^{2} + 1,
\end{equation}
so that this is indeed a Weight.

The value at $q=1$ of the formula \eqref{q_formula} for a Weight $\alpha$ will
be called the \textit{Milnor number} $\mu_\alpha$ of the Weight. For
example, the Milnor number of $(2, 3 ; 8)$ is $5$.

The expression \eqref{q_formula} will be called the \textit{$q$-Milnor
  number} of the Weight. Note that it depends on the choice of a
representative, but only up to substitution of $q$ by some power of
$q$.

\subsection{Variations}

One can make several variants of the definition above, some weaker and
some stronger.

When the condition on the pair $((d_1, d_2, \dots, d_m), D)$ in this
definition is weakened to require only that the value of
\eqref{q_formula} at $q=1$ is an integer, this will be called a \textit{weak Weight}.

One could similarly require that the quotient should be a polynomial in
$\ZZ[q]$ with value at $q=1$ in $\NN$. We will not use this
intermediate notion. It is not clear if one can find something like
this which is not a Weight.

For a given Weight, one can consider a generic polynomial of degree
$D$ in variables $x_1,\dots,x_m$ of degrees $d_1,\dots,d_m$. In order
for such a polynomial to define an isolated hypersurface singularity,
a stronger condition must be imposed on the Weight, which can be found
for example in \cite[\S 2]{hertling_kubel}. Examples of Weights not
satisfying this stronger condition, such as $(16, 18, 21, 55; 165)$,
are displayed in \cite[Table 1]{hertling_mase}.

\subsection{Product}

Let $\mono$ be the set of all Weights. The set $\mono$ can be endowed with the following binary operation. In terms of singularity theory, this
corresponds to the Thom-Sebastiani direct sum of hypersurface
singularities.

Let
$\alpha = (a_1,\dots,a_m;A)$ and $\beta = (b_1,\dots,b_m;B)$ be two Weights. Then one defines
\begin{equation}
  \alpha \times \beta = (B a_1, B a_2, \dots, B a_n, A b_1, \dots, A b_n ; AB),
\end{equation}
One can check that this is indeed a Weight. This could also be defined as
\begin{equation*}
  \alpha \times \beta = (B' a_1, B' a_2, \dots, B' a_n, A' b_1, \dots, A' b_n ; \lcm(A, B)),
\end{equation*}
where $A' = A/\gcd(A,B)$ and $B'=B/\gcd(A,B)$, which is a simpler
representative of the same Weight. One could also define the same
operation as disjoint union of the $a_i$ and $b_i$ by assuming without
loss of generality that $A = B$.

This defines a commutative and associative product $\times$ on the set
$\mono$, with unit the empty Weight $(\emptyset ; 1)$.

For example $(3,5;20) \times (1;5) = (3,4,5;20)$.

\smallskip

There are a few interesting morphisms from $\mono$ to other monoids.

The formula \eqref{q_formula} evaluated at $q=t^{1/D}$ defines a
morphism to the multiplicative monoid of
Puiseux polynomials in $t$.

Similarly, the evaluation of \eqref{q_formula} at $q=1$ defines a
morphism to the multiplicative monoid
$\NN$. This is just the Milnor number.

From \eqref{CY_dimension}, one obtains the formula
\begin{equation}
  \label{central_charge}
  \frac{\sum_{i=1}^{m} (D - 2 d_i)}{D}
\end{equation}
for the Calabi-Yau dimension seen as a positive rational number. This
defines a morphism to the additive monoid $\QQ_{>0}$. This is clear
when seeing $\times$ as concatenation of Weights sharing the same
$D$. This quantity could be called the \textit{central charge} of the
Weight\footnote{It is one third of the central charge appearing in the
  related $N=(2,2)$ super-conformal field theory.}.

For example, the central charge of $(2,3,5,5;15)$ is $2$.

\subsection{Factorisation and prime Weights}

Let us say that a Weight is \textit{prime} if it is not the product of
several strictly smaller Weights, namely with smaller number of degrees.

In order to check that a Weight $(d_1,\dots,d_m;D)$ is prime or not,
one needs to look for non-empty subsets of the $d_i$ that define a Weight and
whose non-empty complement also defines a Weight (keeping the same $D$). This
can be done using the expression of $q$-integers as products of
cyclotomic polynomials $\Phi_d$. In small cases, one can easily check
in this way that some Weight is prime.

As a simple example, let us prove that the Weight $(3,4,5,6;15)$ is
prime. The factors in the $q$-Milnor number are, after simplification,
\begin{equation}
  \frac{\Phi_2 \Phi_4 \Phi_6 \Phi_{12}}{1} \times \frac{\Phi_{11}}{\Phi_2 \Phi_4} \times \frac{\Phi_2 \Phi_{10}}{1} \times \frac{\Phi_9}{\Phi_2 \Phi_6}.
\end{equation}
Because of $\Phi_6$ in its denominator, the fourth term must be
grouped with the first one. Then because of $\Phi_4$ in its
denominator, the second term must also be grouped with the first
one. Then there remains a $\Phi_2$ in the denominator of the result
which is forced to also be grouped with the third term.

Consider now the Weight $(2, 4, 6, 7 ; 18)$ with
Milnor number $88$. One can check that it can be written both as
\begin{equation}
  (1 ; 9) \times (4, 6, 7 ; 18) \quad\text{and as }\quad
  (1 ; 3) \times (2, 4, 7 ; 18),
\end{equation}
where in both factorisations all factors are prime. Something similar
happens for the Weight $(3, 4, 7, 10 ; 24)$. It follows that in the
monoid $\mono$ there is no unique factorisation in prime elements.


\section{The Catalan family}

\label{catalan}

In this section and the following ones, we consider several examples
of families of Weights, starting from a simple case.

\smallskip

The Catalan numbers are defined by the formula
\begin{equation}
  \label{cat}
  c_n = \frac{1}{n+1}\binom{2n}{n},
\end{equation}
which can be written as
\begin{equation*}
  c_n = \frac{2n \dots n + 2}{2 \dots n}
\end{equation*}
for $n \geq 1$. This comes from the Weight $(2,3,\dots,n ; 2n+2)$. In
this case, it is known that the $q$-Milnor number is a polynomial in
$q$ with positive coefficients. Indeed, this polynomial is enumerating
Dyck paths according to the major index, see \cite[St000027]{FindStat}
and \cite{macmahon}.

For small $n$, the Weights in this family are
$\dy{A}_1,\dy{A}_2,\dy{D}_5,S_{1,0},\dy{D}_7 \times \dy{E}_6$, using
factorisation in $\mono$ and the notations in the tables of section \ref{tables}.

The next Weight in this family can still be described using the notion
of invertible polynomials, as considered by in \cite{berglund-h} and
classified in \cite{kreuzer_skarke}: it is the Weight corresponding to
the chain type of parameter $(7, 2, 2, 2, 3)$.

In general, the Weights in the Catalan family cannot be realized by
invertible polynomials in the sense of Berglund-Hübsch. The first
impossible case happens for $n=10$, with Milnor number $16796$.

\smallskip

It turns out that there are at least two different families of posets
that seem to satisfy the Coxeter criterion.

The first family is made of the Tamari lattices $T_n$, introduced by
Tamari in \cite{tamari0}. They have been studied a lot since then,
in particular as a special case of the Cambrian lattices in the theory
of cluster algebras, see for instance \cite{tamari_festschrift}. The
underlying set is the set of planar rooted binary trees with $n$ inner
vertices and $n+1$ leaves, endowed with the partial order whose
covering relations are rotations. The cardinality of $T_n$ is known to
be the Catalan number $c_{n}$.

In the case of the Tamari lattices, the Coxeter polynomial has been
computed by operadic methods in \cite{chapoton_ternaire}. It is
therefore possible to check the Coxeter criterion for large
values of $n$. In principle, one could hope to prove that this general
formula coincides with the formula obtained from the sequence of
Weights, although this has never been done to our knowledge.

Moreover, B. Rognerud has proved in \cite{rognerud_CY} that the derived
category of $T_n$ is indeed fractional Calabi-Yau, of the expected
dimension $(n(n-1),2n+2)$.

\smallskip

The second family of posets is even simpler. The underlying set is
the set $D_n$ of Dyck paths of size $n$, which are lattice paths of
length $2n$ using steps $(+1,+1)$ and $(+1,-1)$, starting from
$(0,0)$, ending at $(2n,0)$ and never going strictly below the
horizontal axis. The number of such paths is known to be $c_n$
too. The partial order on $D_n$ is defined by one path being always
weakly below another path. This defines a distributive lattice.
In this case, no general formula is known for the Coxeter polynomials,
but one can check by computer that they coincide with those of the
Tamari lattices for $n \leq 9$.

All this strongly suggests that the posets $T_n$ and $D_n$ are derived-equivalent, and that both are derived-equivalent to the same
triangulated category of geometric origin associated with an isolated
singularity. For this reason, this derived equivalence has been stated
as a conjecture in \cite{chapoton_CY}.

Recently, some intermediate lattices (named the alt-Tamari lattices)
have been introduced in \cite{cheneviere}, that generalize the
previous two families and apparently share the same Coxeter
polynomials. In a manuscript in preparation with S. Ladkani
\cite{chapoton_ladkani}, we plan to establish the derived equivalences
among these intermediate lattices and in particular solve the above
conjecture on the derived equivalence of the lattices $T_n$ and $D_n$.


\medskip

Let us now briefly talk about closely related posets where the Coxeter
criterion seems to hold, for other sequences of cardinalities given by
similar formulas. In each case, one can guess the Weights from the formula.

First there are some posets enumerated by the Fuss-Catalan numbers,
namely the $m$-Tamari lattices introduced by F. Bergeron and
L.-F. Préville-Ratelle \cite{bergeron_preville} and also the simpler
posets of $m$-Dyck paths under the relation of being weakly below. The
larger family of rational Tamari lattices can also be considered, as
they are counted by a similar formula.

Second, there are the Cambrian lattices associated with a finite
Coxeter group $W$, all enumerated by the ``Coxeter-Catalan number'' for
$W$. In this case, the derived equivalence between these posets, for a
given $W$ and all choices of Coxeter element, has been proved by
Ladkani in \cite{ladkani_cluster}.

Third, there are the partial order on tilting modules (or positive
clusters) for a Weyl group $W$, enumerated by the ``positive
Coxeter-Catalan numbers'' for $W$. In this case too, the derived
equivalence between these posets, for a given $W$ and all choices of
Coxeter element, has been proved by Ladkani in \cite{ladkani_tilting}.

\section{Alternating Sign Matrices}

Alternating sign matrices are combinatorial objects generalizing
permutation matrices, that appeared in the Dodgson condensation
algorithm for computing the determinant. The number of alternating
sign matrices of size $n$ is given by the famous formula
\begin{equation}
  \label{formule_ASM}
  \prod_{k=0}^{n-1} \frac{(3k+1)!}{(n+k)!},
\end{equation}
which was conjectured by Mills, Robbins and Rumsey
\cite{mills_et_alii}, first proved by Zeilberger \cite{zeilberger} and
proved again by Kuperberg \cite{kuperberg} using methods of
statistical mechanics. It is an open problem to find an explicit
bijection between alternating sign matrices and totally symmetric
self-complementary plane partitions, which were enumerated by the same
formula by Andrews in \cite{andrews}. For a detailed account of the
full story, see \cite{bressoud_propp, bressoud_livre}.

\begin{lemma}
  \label{lemme_ASM}
  The formula \eqref{formule_ASM} is the Milnor number
  of the weak Weight
  \begin{equation}
    (\{3k+2,\dots,n+k\}_{0 \leq k \leq n-k-2}  ; 3n).
  \end{equation}
\end{lemma}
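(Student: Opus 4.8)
The plan is to reduce the statement to a pure identity between products of factorials, since the Milnor number of a (weak) Weight is simply the value at $q=1$ of \eqref{q_formula}, namely $\mu = \prod_i (D-d_i)/d_i$ with $D = 3n$. Here the multiset of degrees is the disjoint union, over $0 \le k \le \lfloor n/2\rfloor - 1$, of the integer intervals $\{3k+2, 3k+3, \dots, n+k\}$, the range of $k$ being exactly the condition that these intervals be non-empty. Thus everything reduces to proving
\[
\prod_{k=0}^{K}\ \prod_{j=3k+2}^{n+k}\frac{3n-j}{j} \;=\; \prod_{k=0}^{n-1}\frac{(3k+1)!}{(n+k)!}, \qquad K := \lfloor n/2\rfloor - 1 .
\]
Once this is established, the right-hand side is the known integer \eqref{formule_ASM}, which a posteriori also justifies calling the object a weak Weight, as the only requirement there is integrality at $q=1$.

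First I would evaluate the two inner products over each interval $[3k+2, n+k]$ separately. The denominator is $\prod_{j=3k+2}^{n+k} j = (n+k)!/(3k+1)!$, while in the numerator the substitution $j \mapsto 3n-j$ turns $\{\,3n - j : 3k+2 \le j \le n+k\,\}$ into the interval $[2n-k,\ 3n-3k-2]$, giving $(3n-3k-2)!/(2n-k-1)!$. Collecting the contributions over all $k$ yields
\[
\mu = \prod_{k=0}^{K} \frac{(3n-3k-2)!\,(3k+1)!}{(2n-k-1)!\,(n+k)!} .
\]

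The second step is a reindexing that recovers the missing ``other half'' of the factorial ranges. Writing $3n - 3k - 2 = 3(n-1-k)+1$ and $2n - k - 1 = n + (n-1-k)$ and substituting $k \mapsto n-1-k$, the factors $(3n-3k-2)!$ become $(3k'+1)!$ with $k'$ ranging over $\{n-1-K, \dots, n-1\}$, and likewise the $(2n-k-1)!$ become $(n+k')!$ over the same range. Hence both the numerator and the denominator of $\mu$ are products of exactly the target shape, taken over the two index sets $\{0,\dots,K\}$ and $\{n-1-K,\dots,n-1\}$.

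It then remains to check that these two index sets tile $\{0,1,\dots,n-1\}$. When $n$ is even this is exact, and $\mu$ becomes $\prod_{k=0}^{n-1}(3k+1)!$ over $\prod_{k=0}^{n-1}(n+k)!$ on the nose. When $n$ is odd the two ranges leave out the single middle index $m = (n-1)/2$, but this index is omitted in the numerator and the denominator simultaneously, so the surplus factors cancel because $(3m+1)! = (n+m)!$, again producing exactly \eqref{formule_ASM}. The only delicate point, and the one I would be most careful about, is precisely this bookkeeping: verifying that the reindexed range $\{n-1-K,\dots,n-1\}$ abuts $\{0,\dots,K\}$ with no overlap when $n$ is even, and with exactly one symmetric gap when $n$ is odd, is where an off-by-one error would most easily creep in.
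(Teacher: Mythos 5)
Your proof is correct and is essentially the paper's own argument run in the reverse direction: the paper pairs the $k$-th and $(n-1-k)$-th factorials of \eqref{formule_ASM} to produce the degree intervals $\{3k+2,\dots,n+k\}$, while you start from those intervals, convert each to a ratio of factorials, and reassemble them via the same $k \mapsto n-1-k$ pairing. Your handling of the odd-$n$ case is also the same observation as the paper's, namely that the middle index gives $(3k+1)! = (n+k)!$ and hence contributes a trivial factor.
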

\begin{proof}
  Formula \eqref{formule_ASM} is the quotient of a product of
  factorials by a product of factorials. Consider the $k$-th and
  $(n-1-k)$-th factorials in the numerator, together with the $k$-th and
  $(n-1-k)$-th factorials in the denominator. This gives the quotient
  \begin{equation*}
    \frac{(3k+1)!(3n-3k-2)!}{(n+k)!(2n-k-1)!}.
  \end{equation*}
  Assuming that $k < n-1-k$, this is
  \begin{equation*}
    \frac{(2n-k)\dots(3n-3k-2)}{(3k+2)\dots(n+k)}
  \end{equation*}
  which can be written as the product
  \begin{equation*}
    \prod_i \frac{D-d_i}{d_i}
  \end{equation*}
  where $(d_1,\dots,d_m)=(3k+2,\dots,n+k)$ and $D = 3n$. When $n$ is
  odd and $2k = n-1$, the middle terms in the numerator and the
  denominator of \eqref{formule_ASM} are both $(3k+1)!$ hence can be
  neglected. So the full expression is indeed associated with this
  weak Weight.
\end{proof}

In fact, this weak Weight should be a Weight, and its $q$-Milnor number
should be a polynomial with positive integer coefficients. This can be
checked for $n \leq 30$, but there is no known combinatorial
statistics to explain this property.

For small $n$, the Weights in this family are
$\dy{A}_1,\dy{A}_2,\dy{E}_7,\dy{D}_7 \times \dy{E}_6$.

The expected Calabi-Yau dimension is then $(2 \binom{n + 1}{3}, 3n)$.






\smallskip

There are several natural partial orders on objects enumerated by
formula \eqref{formule_ASM}. The first one is the enveloping lattice
(or Dedekind-MacNeille completion) of the Bruhat order on the
symmetric group $\mathcal{S}_n$, as proved in
\cite{lascoux_schutz}. These posets do not meet the Coxeter
criterion. 

J. Striker has introduced in \cite[\S 5]{striker}, as part of a
more general construction involving a choice among colours, two
families of distributive lattices having \eqref{formule_ASM} as
cardinalities.

The first family (for the colours {\bf b}lue, {\bf y}ellow, {\bf
  o}range and {\bf g}reen in the terminology of \cite{striker}) has
elements in bijection with the alternating sign matrices, and is in
fact isomorphic to the enveloping lattice above.

The second family (for the colours {\bf r}ed, {\bf y}ellow, {\bf
  o}range and {\bf g}reen) has elements in bijection with the totally
symmetric self-complementary plane partitions. Experimentally, the
posets in this family do have the correct Coxeter polynomial for
$n \leq 5$, hence satisfy the Coxeter criterion. So
conjecturally, all these posets should be fractional Calabi-Yau. 



As a side remark, one can note that the poset of size $42$ in this
family seems to be derived-equivalent to the posets of size $42$ in
the Catalan family, and they share the same Weight $\dy{D}_7 \times \dy{E}_6$.

\smallskip

\begin{remark}
  The same idea can be applied to other symmetry classes of plane
  partitions, which are often enumerated by a closed formula involving
  a product. This includes the full set of plane partitions inside an
  $a \times b \times c$ box and the famous formula of MacMahon. This
  also seems to work for totally symmetric plane partitions
  (\oeis{A005157}) and cyclically symmetric plane partitions
  (\oeis{A006366}). In the totally symmetric case, one observes amusing
  coincidences:
  \begin{itemize}
  \item in cardinality $66$, for the
    Weight $\dy{A}_{11} \times \dy{E}_{6}$ with the Weight for the poset of tilting modules of type
    $\dy{F}_4$.
  \item in cardinality $2431$, for the
    Weight $\dy{A}_{17} \times Z_{13} \times Q_{11}$ with the Weight for the poset of tilting modules of type
    $\dy{E}_7$.
  \end{itemize}
\end{remark}


\section{The West family}

In his famous article \cite{west}, West introduced the notion of
\textit{2-stack sortable permutations} and conjectured that the number
of such permutations on $n$ letters is given by the formula
\begin{equation}
  \label{twostack}
  2 \frac{(3 n)!}{ (2n+1)! (n+1)!}.
\end{equation}
This was first proved by Zeilberger in \cite{zeilberger_stack}.

The formula \eqref{twostack} can be written as
\begin{equation*}
  \frac{(3n)\dots(2n+2)}{(3)\dots(n+1)},
\end{equation*}
which comes from the weak Weight
\begin{equation}
  \label{west_weight}
  (3,\dots,n+1;3n+3).
\end{equation}

Here again, it is not clear that the $q$-Milnor number is a polynomial
in $q$ with positive coefficients. One can check by computer that this
is the case for $n \leq 50$.  Assuming that this always holds and
therefore that \eqref{west_weight} defines a Weight, one can look for
posets satisfying the Coxeter criterion.

For small $n$, the Weights in this family are
$\dy{A}_1,\dy{A}_2,\dy{E}_6,\dy{A}_2 \times Z_{11}, \dy{E}_7 \times Z_{13}$.

The next Weight in this family can still be described using the notion
of invertible polynomials, as considered in \cite{berglund-h} and
classified in \cite{kreuzer_skarke}: it is product of $\dy{A}_2$ by
the Weight corresponding to the chain type of parameter $(7, 3, 3, 4)$.

In general, the Weights in the West family cannot be realized by
invertible polynomials in the sense of Berglund-Hübsch. The first
impossible case happens for $n=8$, with Milnor number $9614$.

\smallskip

Besides 2-stack sortable permutations, there are several other families of combinatorial objects with the
same cardinality: left-ternary-trees \cite{left_ternary}, fighting fishes
\cite{fighting_fish}, non-separable planar maps \cite{brown_tutte} and synchronized Tamari
intervals \cite{viennot_lfpr}. The last family is in bijection with the maximal cells in the
diagonal of the associahedra \cite{masuda, combe}.

On these combinatorial objects, one can find several partial
orders. One possibility is by restriction of partial orders on
permutations (weak order, Bruhat order, \textit{etc.}) to the subset of 2-stack
sortable permutations. Another is to use the geometry of the diagonal
of the associahedra, which is naturally oriented.

These tentatives have met no success so far, always failing the
Coxeter criterion as soon as $n$ is not very small. So, the question
remains whether there does exist such a family of posets. One can
even hope for the existence of posets whose Hasse diagrams would be
the oriented 1-skeletons of a sequence of simple polytopes having
their $h$-vectors given by \oeis{A082680}.

\section{The Tamari-intervals family}

In the 1960's, Tutte \cite{tutte_triangle} has enumerated several
kinds of rooted planar maps, obtaining elegant formulas. Among
these, planar rooted triangulations are counted by the formula
\begin{equation}
  \label{tutte_tamarinte}
  2 \frac{(4 n+1)!}{(n+1)! (3 n+2)!}.
\end{equation}
This formula can be written as
\begin{equation*}
  \frac{(4n+1)\dots(3n+3)}{(3)\dots(n+1)},
\end{equation*}
hence comes from the weak Weight
\begin{equation}
  \label{tamarinte_weight}
  (3,\dots,n+1;4n+4).
\end{equation}
Once again, it is not clear if the $q$-Milnor number is a
polynomial with positive coefficients. This property can be checked
for $n \leq 60$, but there is no known combinatorial statistics to
explain this property.

For small $n$, the Weights in this family are
$\dy{A}_1,\dy{A}_3,W_{13},\dy{A}_4 \times W_{17}$.


Assuming that \eqref{tamarinte_weight} always defines a Weight, one can look for posets satisfying the Coxeter
criterion. Besides triangulations, there are now several other
families of combinatorial objects counted by formula
\eqref{tutte_tamarinte}: the set of all intervals in Tamari lattices
\cite{chapoton_tamarintes}, extended fighting fishes
\cite{duchi_henriet}, etc.

So far, no sequence of partial orders with the correct Coxeter
polynomials has been found.
The most natural partial order on Tamari intervals, involved in the
relation with the diagonals of the associahedra, is defined by
$[a,b] \leq [a',b']$ if and only if $a \leq a'$ and $b \leq b'$. It
does not meet the Coxeter criterion.  
The naive partial order by inclusion of intervals does not work either. 

\section{Green mutation poset for the cyclic quivers}

In the theory of cluster algebras, one can associate mutations graphs
to quivers. Using the notion of green mutations, one can define an
orientation of the mutation graph. When the mutation graph is finite,
one obtains finite posets, among which the Tamari lattice considered
in section \ref{catalan}. For details, see the survey \cite{keller_green}.

Let us consider here the sequence of posets defined in this way,
starting from the cyclic quivers on $n$ vertices, with $n\geq 2$. In
the classification by Fomin and Zelevinsky of quivers of finite type \cite{fomin_zelevinsky},
these quivers have type $\dy{D}_n$. The number of elements in the
posets (clusters) is therefore given by the Coxeter-Catalan number of
type $\dy{D}_n$, which is
\begin{equation}
  (3n-2) c_{n-1},
\end{equation}
where $c_n$ is the Catalan number \eqref{cat}. This can be written as
the Milnor number of the weak Weight
\begin{equation}
 (4, 2 n, [6, 9, 12, \dots, 3n-3] ; 6 n),
\end{equation}
where the subsequence of degrees inside the bracket is an arithmetic
progression of step $3$.

One can check that this indeed defines Weights for $n\leq 50$.

For small $n \geq 2$, the Weights in this family are
$\dy{A}_2 \times \dy{A}_2,\dy{A}_2 \times \dy{E}_7,\dy{A}_{2}\times\dy{A}_5\times\dy{D}_5,\dy{E}_{13} \times S_{1,0}$.


Experimentally, the green-mutation partial orders for the cyclic
quivers satisfy the Coxeter criterion for the Weight given above. One
therefore expects them to be fractional Calabi-Yau with the prescribed
dimension.

\begin{remark}
  Let $\Cat_n$ be the Catalan Weight introduced in
  section \ref{catalan}.  One can note that the Weight associated
  above to the green mutation poset for the cyclic quiver of type
  $D_n$ is a multiple of $\Cat_{n-1}$. A similar phenomenon seems to
  happen for the sub-poset consisting of positive clusters, namely
  those not meeting the initial cluster, for the Weight $\dy{A}_{n-1} \times \Cat_{n-1}$.
\end{remark}


\appendix

\section{Derived categories of posets and Coxeter polynomials}

\label{cox}

Let $(P, \leq)$ be a finite partial order. One can define the
incidence algebra of $P$ over a field, and consider the category of
finite dimensional modules over this algebra and its bounded derived
category $\deri(P)$. The category $\deri(P)$ has finite global
dimension and possesses Serre and Auslander-Reiten functors.

On the triangulated category $\deri(P)$, the Auslander-Reiten
translation functor $\tau$ is an auto-equivalence. It induces a linear
map on the Grothendieck group $K_0(\deri(P))$, which is a free abelian
group of rank $|P|$. The matrix of this linear map in the basis made
of classes of simple modules can be described as follows.

Pick any total order on $P$ which is an extension of the partial order
$\leq$. Let $L_P$ be the triangular matrix with coefficient $1$ in
position $(i,j)$ if $i \leq j$ and $0$ elsewhere. The Coxeter matrix
$C_P$ is then $-L_P L_P^{-t}$, where $L_P^{-t}$ is the transpose of
the inverse of $L_P$. The Coxeter polynomial is the characteristic
polynomial of the Coxeter matrix.

The Coxeter polynomial of a poset is concretely available in several
computer algebra systems.

\section{Tables and names}

\label{tables}

Here are small tables of named Weights, some of which have appeared in
the article to describe the first few Weights in the families. The
names come either from quivers, root systems or singularity theory.

For most of these Weights, one can find at least one poset whose
derived category should be equivalent to the geometric category
associated with the Weight.

\begin{center}
\begin{tabular}{|l|c|}
  \hline
  \multicolumn{2}{|c|}{Dynkin quivers} \\
\hline
$\dy{A}_n$ & $(1 ; n+1)$ \\
\hline
$\dy{D}_n$ & $(2, n-2 ; 2n-2)$ \\
\hline
$\dy{E}_6 = \dy{A}_2 \times \dy{A}_3$ & $(3,4;12)$\\
\hline
$\dy{E}_7$ & $(2, 3 ; 9)$ \\
\hline
$\dy{E}_8 = \dy{A}_2 \times \dy{A}_4$ & $(3,5;15)$\\
\hline
\end{tabular}
\quad
\begin{tabular}{|l|c|}
  \hline
 \multicolumn{2}{|c|}{Elliptic root systems} \\
\hline
$\dy{E}_6^{(1,1)} = \dy{A}_2 \times \dy{A}_2 \times \dy{A}_2$ & $(1,1,1;3)$ \\
\hline
$\dy{E}_7^{(1,1)} = \dy{A}_3 \times \dy{A}_3$&$(1,1;4)$ \\
\hline
$\dy{E}_8^{(1,1)} = \dy{A}_2 \times \dy{A}_5$&$(1,2;6)$ \\
\hline
\end{tabular}
\end{center}



\smallskip

\begin{center}
\begin{tabular}{|l|c|}
  \hline
 \multicolumn{2}{|c|}{Arnold's unimodal singularities} \\
\hline
$E_{12} = \dy{A}_2 \times \dy{A}_6 $ & $(3, 7;21)$\\
\hline
$E_{13}$ & $(2, 5;15)$\\
\hline
$E_{14} =\dy{A}_2 \times \dy{A}_7 $& $(3, 8;24) $\\
\hline
$Z_{11}$ & $(3, 4;15)$\\
\hline
$Z_{12}$ & $(2, 3;11)$ \\
\hline
$Z_{13}$ & $(3, 5;18)$\\
\hline
$Q_{10} = \dy{A}_2 \times \dy{D}_5$ & $(6, 8, 9;24)$\\
\hline
$Q_{11}$ & $(4, 6, 7;18)$\\
\hline
$Q_{12}=\dy{A}_2 \times \dy{D}_6$ & $(3, 5, 6;15) $\\
\hline
$W_{12}=\dy{A}_3 \times \dy{A}_4$ & $(4, 5;20) $\\
\hline
$W_{13}$ & $(3, 4;16)$ \\
\hline
$S_{11}$ & $(4, 5, 6;16)$\\
\hline
$S_{12}$ & $(3, 4, 5;13)$ \\
\hline
$U_{12} = \dy{A}_2 \times \dy{A}_2 \times \dy{A}_3$ & $(3, 4, 4;12)$  \\
\hline
\end{tabular}
\quad
\begin{tabular}{|l|c|}
  \hline
 \multicolumn{2}{|c|}{Arnold's bimodal singularities} \\
\hline
$E_{18}=\dy{A}_2 \times \dy{A}_9$ & $(3, 10;30) $\\
\hline
$E_{19}$ & $(2, 7;21)$\\
\hline
$E_{20}=\dy{A}_2 \times \dy{A}_{10}$ & $(3, 11;33) $\\
\hline
$Z_{17}$ & $(3, 7;24)$\\
\hline
$Z_{18}$ & $(2, 5;17)$\\
\hline
$Z_{19}$ & $(3, 8;27)$\\
\hline
$Q_{16}=\dy{A}_2 \times \dy{D}_8$ & $(3, 7, 9;21) $\\
\hline
$Q_{17}$ & $(4, 10, 13;30)$\\
\hline
$Q_{18}=\dy{A}_2 \times \dy{D}_9$ & $(6, 16, 21;48) $\\
\hline
$W_{17}$ & $(3, 5;20)$\\
\hline
$W_{18}=\dy{A}_3 \times \dy{A}_6$ & $(4, 7;28) $\\
\hline
$S_{16}$ & $(3, 5, 7;17)$\\
\hline
$S_{17}$ & $(4, 7, 10;24)$\\
\hline
$U_{16}=\dy{A}_2 \times \dy{A}_2 \times \dy{A}_4$ & $(3, 5, 5;15)$\\
\hline
\end{tabular}
\end{center}

\smallskip

\begin{center}
\begin{tabular}{|l|c|}
  \hline
  \multicolumn{2}{|c|}{Quadrilateral singularities} \\
  \hline
  $J_{3,0} = \dy{A}_2 \times \dy{A}_8$ & $(1, 3;9)$ \\
  \hline
  $Z_{1,0}$ &  $(1, 2;7) $\\
  \hline
  $Q_{2,0} = \dy{A}_2 \times \dy{D}_7$ &  $(2, 4, 5;12)$\\
  \hline
  $W_{1,0} = \dy{A}_3 \times \dy{A}_5 $&  $(2, 3;12)$\\
  \hline
  $S_{1,0}$ &  $(2, 3, 4;10)$\\ 
  \hline
  $U_{1,0} = \dy{A}_2 \times \dy{E}_7 $ & $(2, 3, 3;9)$\\
  \hline
\end{tabular}
\end{center}



\bibliographystyle{alpha}
\bibliography{poids}

\end{document}